\newcommand{\de}{\partial}
\newcommand{\Ric}{\mathrm{Ric}}
\newcommand{\ov}[1]{\overline{#1}}
\newcommand{\vp}{\varphi}
\newcommand{\vol}{\mathrm{Vol}}
\newcommand{\diam}{\mathrm{diam}}
\newcommand{\ve}{\varepsilon}
\renewcommand{\leq}{\leqslant}
\renewcommand{\geq}{\geqslant}
\newcommand{\be}{\begin{equation}}
\newcommand{\ee}{\end{equation}}
\newcommand{\Hess}{\mathrm{Hess}}
\newcommand{\dvol}{\mathrm{dvol}}
\newcommand{\D}{\nabla}
\begin{document}
\newcounter{remark}
\newcounter{theor}
\setcounter{theor}{1}
\newtheorem{claim}{Claim}
\newtheorem{theorem}{Theorem}[section]
\newtheorem{lemma}[theorem]{Lemma}
\newtheorem{corollary}[theorem]{Corollary}
\newtheorem{conjecture}[theorem]{Conjecture}
\newtheorem{proposition}[theorem]{Proposition}
\newtheorem{question}{Question}[section]
\newtheorem{definition}[theorem]{Definition}
\newtheorem{remark}[theorem]{Remark}

\numberwithin{equation}{section}

\title{Spectral comparison results for the $N$-Bakry-Emery Ricci tensor}

\author{Jianchun Chu}
\address[Jianchun Chu]{School of Mathematical Sciences, Peking University, Yiheyuan Road 5, Beijing 100871, People's Republic of China}
\email{jianchunchu@math.pku.edu.cn}

\author{Zihang Hao}
\address[Zihang Hao]{School of Mathematical Sciences, Peking University, Yiheyuan Road 5, Beijing 100871, People's Republic of China}
\email{2301110014@pku.edu.cn}

\begin{abstract}
We establish the diameter and global weighted volume comparison when the $N$-Bakry-Emery Ricci tensor has a positive lower bound in the spectrum sense.
\end{abstract}

\maketitle

\markboth{Jianchun Chu and Zihang Hao}{Spectral comparison results for the $N$-Bakry-Emery Ricci tensor}

\section{Introduction}

Let $(M,g)$ be a complete $n$-dimensional Riemannian manifold and $\mathrm{dvol}_{g}$ be the volume element of $(M,g)$. For $f\in C^{\infty}(M)$, the triple $(M,g,e^{-f}\mathrm{dvol}_{g})$ is called a weighted manifold (or manifold with density), which was introduced by Lichnerowicz \cite{Lichnerowicz70,Lichnerowicz7172}. As a natural generalization of the Laplacian operator, the $f$-Laplacian operator is defined by
\[
\Delta_{f}u := e^{f}\mathrm{div}(e^{-f}\D u) = \Delta u-\langle\nabla u, \nabla f\rangle.
\]
It is clear that $\Delta_{f}$ is self-adjoint with respect to the weighted measure $e^{-f}\mathrm{dvol}_{g}$. As a natural generalization of the Ricci tensor, Lichnerowicz \cite{Lichnerowicz70} and Bakry-Emery \cite{BE85} introduced the tensor
\[
\Ric_{f} := \Ric + \Hess f,
\]
and extensively investigated the connection between $\Ric_{f}$ and properties of the underlying weighted manifold.
This tensor is usually referred to as the Bakry-Emery Ricci tensor in the literature. More generally, the $N$-Bakry-Emery Ricci tensor is defined by
\[
\Ric_f^N := \Ric + \Hess f -\frac{1}{N}df\otimes df.
\]
We will discuss some comparison results for the $N$-Bakry-Emery Ricci tensor below. For more related works, we refer the reader to \cite{BQ00,Perelman02,Lott03,Li05,Morgan09} and the references therein.

In the comparison geometry of Ricci curvature, the Bochner formula is very important, which gives the relationship between the $\Delta$ and $\Ric$:
\begin{equation}\label{Bochner formula Ricci}
\begin{split}
\frac{1}{2}\Delta|\nabla u|^{2}
= {} & |\Hess u|^{2}+\langle\nabla u,\nabla\Delta u\rangle+\Ric(\nabla u,\nabla u) \\
\geq {} & \frac{(\Delta u)^{2}}{n}+\langle\nabla u,\nabla\Delta u\rangle+\Ric(\nabla u,\nabla u).
\end{split}
\end{equation}
For the $N$-Bakry-Emery Ricci tensor, there is a counterpart:
\begin{equation}\label{Bochner formula N-Bakry-Emery Ricci}
\begin{split}
\frac{1}{2}\Delta_{f}|\nabla u|^{2}
= {} & |\Hess u|^{2}+\langle\nabla u,\nabla\Delta_{f} u\rangle+\Ric_{f}(\nabla u,\nabla u)\\
\geq {} & \frac{(\Delta_{f} u)^{2}}{N+n}+\langle\nabla u,\nabla\Delta_{f} u\rangle+\Ric_{f}^{N}(\nabla u,\nabla u).
\end{split}
\end{equation}
Comparing \eqref{Bochner formula Ricci} and \eqref{Bochner formula N-Bakry-Emery Ricci}, one may regard the Bochner formula for $\Ric_{f}^{N}$ of $n$-dimensional manifold as that for $\Ric$ of $(N+n)$-dimensional manifold.

\medskip

When $\Ric\geq(n-1)\lambda>0$, the Bonnet-Myers theorem shows that
\[
\diam(M,g)\leq\frac{\pi}{\sqrt{\lambda}},
\]
while the Bishop-Gromov volume comparison states that for $p\in M$, the volume ratio function
\[
\text{$r \mapsto \frac{\vol(B(p,r))}{\vol_\lambda^{n}(r)}$ is decreasing on $r\in\big(0,\pi/\sqrt{\lambda}\big]$},
\]
where $\vol_\lambda^{n}(\cdot)$ denotes the volume of geodesic ball in the $n$-dimensional sphere with radius $\lambda^{-\frac{1}{2}}$. In particular, the above implies the global volume comparison
\begin{equation}\label{global volume comparison}
\vol(M,g) \leq \vol_\lambda^{n}(\pi/\sqrt{\lambda}) \cdot \lim_{r\to0}\frac{\vol(B(p,r))}{\vol_\lambda^{n}(r)} = \lambda^{-\frac{n}{2}}\vol(\mathbb{S}^{n}).
\end{equation}

There has been great interest in extending the above comparison results to $N$-Bakry-Emery Ricci tensor setting. As in the above discussion of Bochner formula, when $N$ is finite, the $n$-dimensional weighted manifold with positive $N$-Bakry-Emery Ricci tensor looks like the $(N+n)$-dimensional manifold with positive Ricci tensor. However, when $N=\infty$, the situation becomes subtle. When $f$ is unbounded, the condition $\Ric_f \geq (n-1)\lambda$ does not even guarantee the compactness of $M$ (see \cite[Example 2.1 and 2.2]{WW09}). These examples shows that the boundedness assumption of $f$ (i.e. $F:=\|f\|_{L^{\infty}(M)}<\infty$) is necessary when $N=\infty$.

When $\Ric_f^N \geq (n-1)\lambda>0$ and $N\in(0,\infty)$, Qian \cite{Qian97} showed that
\[
\diam(M,g) \leq \sqrt{\frac{N+n-1}{n-1}} \cdot \frac{\pi}{\sqrt{\lambda}}.
\]
When $N=\infty$, diameter comparison results are listed below:\\[-2mm]
\begin{itemize}\setlength{\itemsep}{3mm}
\item Wei-Wylie \cite{WW09}: $\diam(M,g)\leq\big(1+\frac{4}{(n-1)\pi}F\big)\cdot\frac{\pi}{\sqrt{\lambda}}$;
\item Limoncu \cite{Limoncu12}: $\diam(M)\leq \sqrt{1+\frac{2\sqrt{2}}{n-1}F}\cdot\frac{\pi}{\sqrt{\lambda}}$; \\[-1mm]
\item Tadano \cite{Tadano16}: $\diam(M)\leq \sqrt{1+\frac{8}{(n-1)\pi}F}\cdot\frac{\pi}{\sqrt{\lambda}}$. \\[-1mm]
\end{itemize}
On the other hand, Bakry-Qian \cite{BQ05} proved that when $\Ric_{f}^{N}\geq(n-1)\lambda>0$, then for $p\in M$, the weighted volume ratio function
\begin{equation}\label{weighted volume ratio function N+n}
\text{$r \mapsto \frac{\vol_{f}(B(p,r))}{\vol_\lambda^{N+n}(r)}$ is decreasing on $r\in\left(0,\sqrt{\frac{N+n-1}{n-1}} \cdot \frac{\pi}{\sqrt{\lambda}}\right]$}.
\end{equation}
When $N=\infty$, Wei-Wylie \cite{WW09} proved that for $p\in M$,
\begin{equation}\label{weighted volume ratio function n+4F}
\text{$r \mapsto \frac{\vol_{f}(B(p,r))}{\vol_\lambda^{n+4F}(r)}$ is decreasing on $r\in\big(0,\pi/4\sqrt{\lambda}\big]$}.
\end{equation}
Note that one does not obtain the global weighted volume comparison (i.e. the weighted version of \eqref{global volume comparison}) since the limit in \eqref{weighted volume ratio function N+n} or \eqref{weighted volume ratio function n+4F} blows up when $r\to0$. To the best of our knowledge, such global weighted volume comparison is still lacking.

\medskip

Now let us go back to the $n$-dimenisnal Riemannian manifolds with $\Ric\geq(n-1)\lambda>0$. Following the isoperimetric profile method from Bray \cite{Bray97}, Bray-Gui-Liu-Zhang \cite{BGLZ19} gave an alternative proof of the global volume comparison $\vol(M,g)\leq\lambda^{-n/2}\vol(\mathbb{S}^{n})$. Following the ideas developed in \cite{Bray97,BGLZ19} and $\mu$-bubble method from Gromov \cite{Gromov19}, Antonelli-Xu \cite{AX17} proved the global volume comparison and diameter comparison in the spectrum sense (see also \cite{Xu23,CLMS24}).

\begin{theorem}[Theorem 1 and Lemma 1 of \cite{AX17}]\label{Antonelli-Xu theorem}
Let $(M,g)$ be a complete $n$-dimensional Riemannian manifold with $n\geq3$, and $\gamma$ be a constant such that
\[
 0 \leq \gamma \leq \frac{n-1}{n-2}.
\]
Suppose that there exists a positive function $u\in C^{\infty}(M)$ such that
\[
u\,\Ric(x)-\gamma\Delta u \geq (n-1)\lambda u
\]
for some $\lambda>0$. Here 
\[
\Ric(x) := \inf_{v\in T_xM,\,|v|=1}\Ric(v,v)
\] 
denotes the smallest eigenvalue of the Ricci tensor at $x$. If
\[
0 < \inf_{M}u \leq \sup_{M} u < \infty,
\]
then the following comparison results hold:
\begin{enumerate}\setlength{\itemsep}{1mm}
\item Diameter comparison:
\[
\diam(M,g) \leq
\left(\frac{\sup_{M}u}{\inf_{M}u}\right)^{\frac{n-3}{n-1}\gamma} \cdot \frac{\pi}{\sqrt{\lambda}}.
\]

\item Global volume comparison:
\[
\vol(M,g) \leq \lambda^{-n/2}\vol(\mathbb{S}^{n}).
\]
\end{enumerate}
\end{theorem}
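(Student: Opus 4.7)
The plan is to combine the isoperimetric-profile method of Bray--Gui--Liu--Zhang with Gromov's $\mu$-bubble technique, using the conformal weight $u$ as a test function to absorb the extra $\gamma \Delta u$ in the hypothesis. First I would rewrite the assumption in the pointwise form
\[
\Ric(x) \ge (n-1)\lambda + \gamma\,\frac{\Delta u}{u},
\]
so that the ``excess'' correction $\gamma \Delta u/u$ is a divergence-type quantity that can be rearranged by integration by parts, while the bounds $0 < \inf u \le \sup u < \infty$ keep any residual boundary contributions controlled by $\sup u / \inf u$.

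For the diameter bound (1), I would argue by contradiction. Assuming the diameter exceeds the claimed length, pick a smoothed signed distance $\rho$ between two almost-antipodal points and consider the $\mu$-bubble minimizer of
\[
\mathcal{A}(\Omega) = \mathrm{Per}(\Omega) - \int_\Omega h\,\dvol,\qquad h(x) = (n-1)\sqrt{\lambda}\,\tan(\sqrt{\lambda}\,\rho(x)),
\]
inside the appropriate slab. On a smooth minimizer $\Sigma = \partial\Omega$ the first variation yields $H_\Sigma = h$, while the second variation gives the stability inequality
\[
\int_\Sigma \bigl(|\nabla\psi|^2 - (|A|^2 + \Ric(\nu,\nu) + \partial_\nu h)\psi^2\bigr)\,\dvol \ge 0
\]
for every $\psi \in C^\infty_c(\Sigma)$. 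Testing with $\psi = u^\alpha$ for an exponent $\alpha = \alpha(n,\gamma)$ to be optimized, and using the decomposition $\Delta u = \Delta_\Sigma u + \Hess u(\nu,\nu) + h\,\partial_\nu u$, a careful integration by parts on $\Sigma$ absorbs the $\gamma\Delta u/u$ contribution into a perfect square plus the principal term $(n-1)\lambda \int_\Sigma u^{2\alpha}$. Comparison with the length budget in the slab then forces a contradiction; the exponent $(n-3)\gamma/(n-1)$ arises from the optimal $\alpha$.

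For the volume bound (2), I would follow the isoperimetric-profile method of \cite{Bray97, BGLZ19}. Define $I(v) := \inf\{\mathrm{Per}(\Omega) : \vol(\Omega) = v\}$ and at a (possibly almost-) smooth minimizer $\Omega_v$ apply the second variation against the first eigenfunction of the Jacobi operator of the round sphere $\mathbb{S}^n(\lambda^{-1/2})$. Using the pointwise Ricci bound, the correction $\gamma \Delta u/u$ can be reduced to a boundary/divergence contribution on $\partial \Omega_v$ which cancels after a further integration by parts, leaving an ODI on $I$ identical to the one satisfied by the spherical profile. Integrating this ODI from $v = 0$ to $v = \vol(M,g)$ then yields the sharp bound $\vol(M,g) \le \lambda^{-n/2}\vol(\mathbb{S}^n)$, independent of $\sup u / \inf u$.

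The main obstacle is the optimization step in (1): the perfect-square completion that eliminates $\gamma\Delta u/u$ is nondegenerate precisely under the constraint $0\le\gamma\le(n-1)/(n-2)$, and matching the resulting model constant to the round sphere is what pins down the exponent $(n-3)\gamma/(n-1)$. Regularity of the $\mu$-bubble boundary (smooth for $n\le 7$ and handled by Federer-type dimension reduction in general) and of the isoperimetric minimizer are secondary but must also be addressed with standard geometric-measure-theoretic care.
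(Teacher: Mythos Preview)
Your overall architecture (a $\mu$-bubble for the diameter, an isoperimetric profile for the volume) matches \cite{AX17} and the present paper, but there is a structural gap in both parts: you work with the \emph{unweighted} perimeter $\mathrm{Per}(\Omega)$ and try to recover the $\gamma\Delta u/u$ term by substituting the pointwise bound $\Ric(\nu,\nu)\ge (n-1)\lambda+\gamma\Delta u/u$ into the stability inequality, then decomposing $\Delta u=\Delta_\Sigma u+\Hess u(\nu,\nu)+H\,u_\nu$. The tangential piece $\Delta_\Sigma u$ and the first-order piece $H\,u_\nu$ can indeed be handled by integration by parts on $\Sigma$ and the first-variation identity $H=h$, but the normal Hessian $\Hess u(\nu,\nu)$ cannot: it is a genuine second normal derivative with no sign, it does not arise from any term in the unweighted second variation, and no choice of the exponent $\alpha$ in $\psi=u^\alpha$ produces a compensating term. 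So the ``perfect-square completion'' you describe does not close.

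The device that fixes this, and which is the main point of the Antonelli--Xu argument (and of the present paper in the weighted setting), is to put the weight $u^\gamma$ on the \emph{perimeter} itself and a matching weight $u^\alpha$ on the bulk term: one minimizes
\[
E(\Omega)=\int_{\partial^*\Omega}u^\gamma-\int(\chi_\Omega-\chi_{\Omega_0})\,h\,u^\alpha .
\]
The first variation then yields $H=h\,u^{\alpha-\gamma}-\gamma u^{-1}u_\nu$, and crucially the second variation of the weighted area produces the term $\gamma u^{-1}\varphi\,(\Delta u-\Delta_{\partial\Omega}u-Hu_\nu)=\gamma u^{-1}\varphi\,\Hess u(\nu,\nu)$. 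Grouped with $-\Ric(\nu,\nu)\varphi$ this becomes $\big(\gamma\Delta u-u\,\Ric(\nu,\nu)\big)u^{-1}\varphi\le -(n-1)\lambda\varphi$, so the spectral hypothesis is invoked \emph{directly}, with no leftover normal Hessian. One then tests with $\varphi=u^{-\gamma}$ (not $u^{\alpha}$), chooses $\alpha=\frac{2\gamma}{n-1}$, and the remaining cross terms in $u^{-1}u_\nu$ and $h$ complete to a square exactly under the constraint $0\le\gamma\le\frac{n-1}{n-2}$; the factor $(\sup u/\inf u)^{(n-3)\gamma/(n-1)}$ comes from the residual power $u^{\alpha-\gamma}$ in the resulting inequality for $h$. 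The same weighted functionals are needed for the isoperimetric profile in part (2): the unweighted profile you propose runs into the identical $\Hess u(\nu,\nu)$ obstruction in the constrained second variation.
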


Motivated by Theorem \ref{Antonelli-Xu theorem}, we aim to establish analogous results for the $N$-Bakry-Emery Ricci tensor by adapting the approach of Antonelli-Xu \cite{AX17}.

\begin{theorem}\label{main theorem}
Let $(M,g)$ be a complete $n$-dimensional Riemannian manifold with $n\geq3$, and $N$, $\gamma$ be two constants such that
\[
N \in (-\infty,-(n-1)) \cup (0,+\infty), \ \ \
0 \leq \gamma \leq \frac{N+n-1}{N+n-2}.
\]
Suppose that there exists a positive function $u\in C^{\infty}(M)$ such that
\[
u\,\Ric_{f}^{N}(x)-\gamma\Delta_{f}u \geq (n-1)\lambda u
\]
for some $f\in C^{\infty}(M)$ and $\lambda>0$. Here 
\[
\Ric_f^N(x) := \inf_{v\in T_xM,\,|v|=1}\Ric_{f}^N(v,v)
\] 
denotes the smallest eigenvalue of the $N$-Bakry-Emery Ricci tensor at $x$. If
\[
0 < \inf_{M}u \leq \sup_{M} u < \infty, \ \ F = \|f\|_{C^{0}(M)} < \infty,
\]
then the following comparison results hold:
\begin{enumerate}\setlength{\itemsep}{1mm}
\item Diameter comparison:
\[
\diam(M,g) \leq
\begin{cases}
\left(\frac{\sup_{M}u}{\inf_{M}u}\right)^{\frac{N+n-3}{N+n-1}\gamma}
\cdot \sqrt{\frac{N+n-1}{n-1}} \cdot \frac{\pi}{\sqrt{\lambda}} & \mbox{\text{when $N>0$}}, \\[2mm]
\left(\frac{\sup_{M}u}{\inf_{M}u}\right)^{\frac{n-3}{n-1}\gamma}
\cdot e^{\frac{2}{n-1}F} \cdot \frac{\pi}{\sqrt{\lambda}} & \mbox{\text{when $N<-(n-1)$}}.
\end{cases}
\]

\item Global weighted volume comparison:
\[
\vol_{f}(M,g) \leq e^{\frac{(n+1)(3n-1)}{n(n-1)}F} \cdot \lambda^{-\frac{n}{2}}\vol(\mathbb{S}^{n}).
\]
\end{enumerate}
\end{theorem}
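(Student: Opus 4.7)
The plan is to follow the two-pronged approach of Antonelli-Xu \cite{AX17}: a Jacobi-field/Riccati comparison along minimizing geodesics for the diameter bound, and the isoperimetric profile / $\mu$-bubble method of \cite{Bray97,BGLZ19,Gromov19,AX17} for the volume bound. The weighted Bochner formula \eqref{Bochner formula N-Bakry-Emery Ricci} will replace \eqref{Bochner formula Ricci} throughout, giving the effective dimension $N+n$ when $N>0$; when $N<-(n-1)$, one needs additional bookkeeping of the $|\nabla f|$-error terms in the spirit of Wei-Wylie \cite{WW09}, Limoncu \cite{Limoncu12} and Tadano \cite{Tadano16}, which is how the $e^{2F/(n-1)}$ factor will enter.

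\textbf{Diameter bound.} Along a unit-speed minimizing geodesic from $p$, the $f$-mean curvature $H_f(r):=\Delta_f r$ satisfies, by the radial form of \eqref{Bochner formula N-Bakry-Emery Ricci} together with the algebraic improvement $|\Hess r|^2+(\partial_r f)^2/N \geq (\Delta_f r)^2/(N+n-1)$,
\[
 H_f'(r) \leq -\frac{H_f(r)^2}{N+n-1} - \Ric_f^N(\partial_r,\partial_r)
\]
when $N>0$, with a Wei-Wylie-type variant holding when $N<-(n-1)$. I would then substitute the spectral hypothesis $\Ric_f^N(\partial_r,\partial_r) \geq (n-1)\lambda + \gamma\Delta_f u/u$ and absorb the unwanted term $\gamma\Delta_f u/u$ by introducing a reparametrization $dr=u^{\alpha}ds$ with $\alpha=\gamma/(N+n-1)$ (resp.\ $\alpha=\gamma/(n-1)$) together with a conjugated quantity $\widetilde H_f:=u^{\alpha}H_f+\beta\,\partial_s\log u$. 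The goal is that $\widetilde H_f$ satisfies a clean Bonnet-Myers-type ODE in the new variable $s$, whose blow-up time is at most $\sqrt{(N+n-1)/(n-1)}\,\pi/\sqrt\lambda$ (resp.\ $\pi/\sqrt\lambda$). Translating back via $r \leq (\sup u)^{\alpha}s_{\max}$ and using $(\inf u)^{-\alpha}$ on the lower bound yields the claimed $(\sup u/\inf u)^{(N+n-3)\gamma/(N+n-1)}$ factor (resp.\ $(\sup u/\inf u)^{(n-3)\gamma/(n-1)}\cdot e^{2F/(n-1)}$).

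\textbf{Global weighted volume bound.} For the volume comparison, I would consider the weighted isoperimetric profile
\[
 I_f(v) := \inf\bigl\{\mathcal{H}_f^{n-1}(\partial\Omega) : \Omega\subset M,\ \vol_f(\Omega)=v\bigr\}
\]
and run the $\mu$-bubble second-variation argument of \cite{AX17}, replacing $\Delta$, $\Ric$, and $\mathrm{dvol}_g$ by $\Delta_f$, $\Ric_f^N$, and $e^{-f}\mathrm{dvol}_g$, and inputting the spectral hypothesis $u\Ric_f^N-\gamma\Delta_f u\geq (n-1)\lambda u$. The weighted Bochner formula \eqref{Bochner formula N-Bakry-Emery Ricci} will yield, as in \cite{AX17}, a differential inequality for $I_f$ that integrates to an upper bound for $\vol_f(M)$ in terms of $\lambda^{-n/2}\vol(\mathbb{S}^n)$, multiplied by an $e^{cF}$ factor arising from the $L^\infty$ discrepancy between $\mathrm{dvol}_g$ and $e^{-f}\mathrm{dvol}_g$ that must be absorbed at each step. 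Keeping careful track of these absorptions through the profile ODE should produce the stated exponent $c=(n+1)(3n-1)/[n(n-1)]$.

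\textbf{Main obstacle.} The most delicate step is the third one: each conversion between weighted and unweighted quantities costs $e^{\pm F}$, and these losses compound through the $\mu$-bubble second variation, the profile ODE, and the final integration. Obtaining the \emph{sharp} constant $(n+1)(3n-1)/[n(n-1)]$ (rather than a larger, non-optimal one) will require designing the $\mu$-bubble functional and the conjugation used in the diameter step so that intermediate $e^{F}$ factors cancel as much as possible, rather than accumulating naively. I expect this bookkeeping to be where the main technical work lies.
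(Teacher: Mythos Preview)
Your plan for the diameter bound has a genuine gap. The spectral hypothesis gives you control of the combination $u\,\Ric_f^N-\gamma\Delta_f u$, and the full $f$-Laplacian $\Delta_f u$ cannot be reduced to a one-dimensional quantity along a single geodesic: writing $\Delta_f u = u_{rr}+H u_r+\Delta_{\Sigma_r}u-\langle\nabla f,\nabla u\rangle$ in geodesic polar coordinates, the tangential term $\Delta_{\Sigma_r}u$ has no sign and no a priori bound, so your proposed Riccati inequality for $H_f$ never closes. No reparametrization $dr=u^\alpha\,ds$ or conjugation of $H_f$ will cure this, because the missing term lives on the level sets, not along the ray. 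This is exactly why Antonelli--Xu, and the present paper, do \emph{not} prove the diameter bound by Jacobi fields: both use the $\mu$-bubble method for the diameter as well. Concretely, the paper minimizes
\[
E(\Omega)=\int_{\partial^*\Omega}u^{\gamma}e^{-f}-\int(\chi_{\Omega}-\chi_{\Omega_0})\,h\,u^\alpha e^{-(k+1)f},
\]
computes the second variation with test function $\varphi=u^{-\gamma}$, and observes that on a hypersurface the full $\Delta u$ (hence $\Delta_f u$) appears in the variation of the mean curvature, so it can be paired with $\Ric_f^N$ to invoke the spectral hypothesis. The parameter $k$ is then chosen ($k=0$ for $N>0$, $k=\tfrac{2}{n-1}$ for $N<-(n-1)$) to kill the cross term $Xf_\nu$, and a standard $h$ with $|\nabla h|<Ch^2+D$ gives the contradiction if the diameter is too large.

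For the volume bound your outline is in the right spirit but misses the same point. The profile the paper uses is not $I_f(v)=\inf\{\mathcal H_f^{n-1}(\partial\Omega):\vol_f(\Omega)=v\}$; it is doubly weighted,
\[
I(v)=\inf\Big\{\int_{\partial^*E}u^{\gamma}e^{-f}:\ \int_E u^{\alpha}e^{-(k+1)f}=v\Big\},\qquad \alpha=\tfrac{2\gamma}{n-1},\ k=\tfrac{2}{n-1},
\]
precisely so that the constrained second variation produces $\gamma\Delta_f u-u\,\Ric_f^N$ and the cross terms in $u^{-1}u_\nu$ and $f_\nu$ cancel. With the naive $e^{-f}$-only profile you would again be unable to absorb $\gamma\Delta_f u/u$. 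Once the correct weights are in place the paper obtains $I''I\le -\tfrac{(I')^2}{n-1}-(n-1)\lambda e^{-2kF}$, feeds this and the small-$v$ asymptotic $\limsup v^{-(n-1)/n}I(v)\le n\,\vol(\mathbb B^n)^{1/n}e^{F/n}$ into an ODE comparison, and the exponent $\tfrac{(n+1)(3n-1)}{n(n-1)}$ falls out of a short bookkeeping at the end rather than from a delicate cancellation scheme.
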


\begin{remark}
To the best of our knowledge, even when $u\equiv 1$, (2) of Theorem \ref{main theorem} seems to be new, which gives a global weighted volume comparison for $(M,g,e^{-f}\dvol_{g})$ with $\Ric_{f}^{N}\geq(n-1)\lambda>0$.
\end{remark}

\medskip

{\bf Acknowledgements.}
The authors would like to thank Kai Xu, Jintian Zhu and Shihang He for useful discussions. The first-named author was partially supported by National Key R\&D Program of China 2023YFA1009900 and NSFC grant 12271008. The second-named author was partially supported by National Key R\&D Program of China 2023YFA1009900.

\section{Proof of diameter comparison}

Suppose that $\Omega_-\subset \Omega_+\subset M$ be two domains such that $\de\Omega_{-}\neq\emptyset$, $\de\Omega_{+}\neq\emptyset$ and $\overline{\Omega_+}\setminus\Omega_-$ is compact. Let $h\in C^\infty(\Omega_{+}\setminus\overline{\Omega_-})$ be a function satisfying
\[
\text{$\lim_{x\to \de\Omega_-}h(x) = +\infty$, $\lim_{x\to \de\Omega_+}h(x) = -\infty$ uniformly.}
\]
Fix a constant $k$ and a domain $\Omega_0$ such that $\Omega_-\Subset \Omega_0\Subset \Omega_+$. For a set $\Omega$ with finite perimeter, we consider the functional
\[
E(\Omega)=\int_{\de^*\Omega}u^{\gamma}e^{-f}-\int(\chi_{\Omega}-\chi_{\Omega_0})hu^\alpha e^{-(k+1)f},
\]
where $\de^*\Omega$ denotes the reduced boundary of $\Omega$. By the similar argument of \cite[Proposition 2.1]{Zhu21}, the functional $E$ must have a minimizer $\Omega$ such that $\Omega_{-}\Subset\Omega\Subset\Omega_{+}$. The classical geometric measure theory (see e.g. \cite[Theorem 27.5, Theorem 28.1]{Maggi12}) shows that $\de\Omega$ is smooth when $n\leq 7$ while the singular part of $\de\Omega$ has Hausdorff dimension no more than $n-8$.

\subsection{The case $3\leq n\leq7$}

For any $\vp\in C^{\infty}(M)$, let $\{\Omega_{t}\}_{t\in(-\ve,\ve)}$ be a smooth family of sets satisfying
\begin{itemize}\setlength{\itemsep}{2mm}
\item $\Omega_{0}=\Omega$;
\item $\frac{\de \Omega_{t}}{\de t}=\vp\nu$, where $\nu$ denotes the outer unit normal of $\de\Omega_{t}$.
\end{itemize}
We compute the first variation:
\[
0 = \frac{\mathrm{d}}{\mathrm{d}t}E(\Omega_t)\Big|_{t=0}
= \int_{\partial \Omega}(H+\gamma u^{-1}u_\nu-f_\nu-hu^{\alpha-\gamma}e^{-kf})u^\gamma e^{-f}\vp.
\]
Since $\vp$ is arbitrary, then we obtain
\begin{equation}\label{H}
H = f_\nu+hu^{\alpha-\gamma}e^{-kf}-\gamma u^{-1}u_\nu.
\end{equation}
We next compute the second variation:
\[
\begin{split}
0 \leq {} & \frac{\mathrm{d}^{2}}{\mathrm{d}t^{2}}E(\Omega_t)\Big|_{t=0} \\
= {} & \int_{\partial \Omega}\Big(-\Delta_{\partial\Omega}\varphi-|\mathrm{II}|^2\varphi-\Ric(\nu,\nu)\varphi-\gamma u^{-2}u_\nu^2\varphi \\
& \quad \quad \ \, +\gamma u^{-1}\varphi(\Delta u-\Delta_{\partial \Omega}u-Hu_\nu)-\gamma u^{-1}\langle\nabla_{\partial\Omega}u,\nabla_{\partial\Omega}\varphi\rangle \\[1mm]
& \quad \quad \ \, -\mathrm{Hess} f(\nu,\nu)\vp + \langle\nabla_{\partial\Omega} f,\nabla_{\partial \Omega}\varphi\rangle-h_{\nu}u^{\alpha-\gamma}e^{-kf}\varphi\\
& \quad \quad \ \, -(\alpha-\gamma)hu^{\alpha-\gamma-1}u_\nu e^{-kf} \varphi+k hu^{\alpha-\gamma}e^{-kf}f_{\nu}\varphi\Big) u^\gamma e^{-f}\varphi.
\end{split}
\]
By the definition of $\Delta_{f}$ and $\Ric_{f}^{N}$,
\begin{equation}\label{Ric Delta u Hess f}
\begin{split}
& -\Ric(\nu,\nu)\varphi+\gamma u^{-1}\varphi\Delta u-\mathrm{Hess} f(\nu,\nu)\vp \\[1mm]
= {} & -\big(\Ric_{f}^{N}(\nu,\nu)+\frac{1}{N}f_{\nu}^{2}\big)\varphi+\gamma u^{-1}\varphi\big(\Delta_{f}u+\langle\nabla u,\nabla f\rangle\big) \\
= {} & \big(\gamma\Delta_{f}u-u\Ric_{f}^{N}(\nu,\nu)\big)u^{-1}\varphi-\frac{1}{N}f_{\nu}^{2}\vp+\gamma u^{-1}\varphi\langle\nabla u,\nabla f\rangle.
\end{split}
\end{equation}
It then follows that
\[
\begin{split}
0 \leq \int_{\partial \Omega}
\Big(& -\Delta_{\partial\Omega}\varphi-|\mathrm{II}|^2\varphi+\big(\gamma\Delta_{f}u-u\Ric_{f}^{N}(\nu,\nu)\big)u^{-1}\varphi-\frac{1}{N}f_{\nu}^{2}\vp \\
& +\gamma u^{-1}\varphi\langle\nabla u,\nabla f\rangle-\gamma u^{-2}u_\nu^2\varphi+\gamma u^{-1}\varphi(-\Delta_{\partial \Omega}u-Hu_\nu) \\[1mm]
& -\gamma u^{-1}\langle\nabla_{\partial\Omega}u,\nabla_{\partial\Omega}\varphi\rangle
+\langle\nabla_{\partial\Omega} f,\nabla_{\partial \Omega}\varphi\rangle-h_{\nu}u^{\alpha-\gamma}e^{-kf}\varphi\\
& -(\alpha-\gamma)hu^{\alpha-\gamma-1}u_\nu e^{-kf} \varphi+khu^{\alpha-\gamma}e^{-kf}f_{\nu}\varphi\Big) u^\gamma e^{-f}\varphi.
\end{split}
\]

\begin{lemma}\label{h inequality}
When $\vp=u^{-\gamma}$ and $\alpha=\frac{kN+2}{N+n-1}\gamma$, we have
\[
\begin{split}
0 \leq \int_{\partial \Omega}&\Big( -\frac{\big(4k-(n-1)k^2\big)N+4}{4(N+n-1)}u^{2\alpha-2\gamma}e^{-kf}h^{2} \\
& +|\nabla h|u^{\alpha-\gamma}-(n-1)\lambda e^{kf}
\Big)u^{-\gamma}e^{-(k+1)f}.
\end{split}
\]
\end{lemma}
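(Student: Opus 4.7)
The plan is to substitute $\varphi = u^{-\gamma}$ directly into the second variation inequality displayed just before the lemma, and then reorganize the integrand as a non-positive quadratic form in the normal derivatives $(f_\nu, u_\nu)$ plus exactly the three terms appearing in the conclusion.

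First, using $u^\gamma e^{-f}\varphi = e^{-f}$ and $\nabla\varphi = -\gamma u^{-\gamma-1}\nabla u$, I would integrate the $-\Delta_{\partial\Omega}\varphi$ term by parts on the closed hypersurface $\partial\Omega$, similarly expand $-\gamma u^{-1}\varphi \Delta_{\partial\Omega}u$, and apply \eqref{Ric Delta u Hess f} together with the spectral hypothesis $u\,\Ric_f^N(\nu,\nu) - \gamma\Delta_f u \geq (n-1)\lambda u$ to replace the curvature combination by $-(n-1)\lambda\varphi - \frac{1}{N}f_\nu^2\varphi + \gamma u^{-1}\varphi\langle\nabla u,\nabla f\rangle$. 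A short computation then shows that all tangential-gradient contributions collapse to the single non-positive term $-\gamma u^{-\gamma-2}e^{-f}|\nabla_{\partial\Omega}u|^2$, which can be discarded.

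Next, bounding $-|\mathrm{II}|^2 \leq -H^2/(n-1)$ by the trace Cauchy--Schwarz inequality, substituting $H = f_\nu + hu^{\alpha-\gamma}e^{-kf} - \gamma u^{-1}u_\nu$ from \eqref{H} and expanding $H^2$, and using $-h_\nu \leq |\nabla h|$ on the $-h_\nu u^{\alpha-\gamma}e^{-kf}\varphi$ term, the integrand (after factoring out $u^{-\gamma}e^{-(k+1)f}$) becomes an explicit quadratic form in the three scalars $(f_\nu,\, u^{-1}u_\nu,\, hu^{\alpha-\gamma}e^{-kf})$, together with the linear $|\nabla h|u^{\alpha-\gamma}$ contribution and the constant $-(n-1)\lambda e^{kf}$ term. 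The diagonal coefficients of the $(f_\nu, u^{-1}u_\nu)$ block work out to $-\frac{N+n-1}{N(n-1)}$ and $-\frac{\gamma(n-1-\gamma(n-2))}{n-1}$; the hypothesis $\gamma \leq \frac{N+n-1}{N+n-2}$ is precisely the algebraic condition under which this block is negative semi-definite.

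The decisive step is then to complete the square in $(f_\nu, u^{-1}u_\nu)$, absorbing the three kinds of cross terms with $h$ (coming from the $H^2$ expansion, the normal piece of $\gamma u^{-1}\varphi\langle\nabla u,\nabla f\rangle$, and the explicit $khu^{\alpha-\gamma}e^{-kf}f_\nu\varphi$ and $-(\alpha-\gamma)hu^{\alpha-\gamma-1}u_\nu e^{-kf}\varphi$ contributions), leaving a residual $h^2$ term whose coefficient is a rational function of $\alpha,\gamma,k,N,n$. The specific value $\alpha = \frac{kN+2}{N+n-1}\gamma$ is tuned precisely so that this residual simplifies to $-\frac{(4k-(n-1)k^2)N+4}{4(N+n-1)}$, matching the statement. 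The main obstacle is this bookkeeping: tracking every cross term accurately, verifying negative semi-definiteness of the $(f_\nu, u_\nu)$ block in the specified range of $\gamma$, and confirming that the chosen $\alpha$ produces exactly the stated coefficient.
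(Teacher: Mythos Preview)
Your proposal is correct and follows essentially the same route as the paper: substitute $\varphi=u^{-\gamma}$, use the pointwise identities \eqref{Delta vp Delta u nabla u nabla vp}--\eqref{nabla u nabla f nabla f nabla vp} (your ``integration by parts'' is in fact this pointwise cancellation), apply $|\mathrm{II}|^2\ge H^2/(n-1)$ and the spectral hypothesis, insert \eqref{H}, and reduce to a quadratic in $(f_\nu, u^{-1}u_\nu, hu^{\alpha-\gamma}e^{-kf})$. The only cosmetic difference is that the paper eliminates $f_\nu$ first and then uses the choice of $\alpha$ to kill the resulting $XY$ cross term and the bound on $\gamma$ to drop the $Y^2$ term, whereas you package the $(f_\nu,Y)$ part as a single negative semi-definite $2\times2$ block; the two completions of the square are equivalent and yield the same residual $h^2$ coefficient.
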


\begin{proof}
By $\vp=u^{-\gamma}$ and $\gamma\geq0$, we see that
\begin{equation}\label{Delta vp Delta u nabla u nabla vp}
-\Delta_{\partial\Omega}\varphi-\gamma u^{-1}\varphi\Delta_{\partial \Omega}u
-\gamma u^{-1}\langle\nabla_{\partial\Omega}u,\nabla_{\partial\Omega}\varphi\rangle
= -\gamma u^{-\gamma-2}|\nabla_{\de\Omega}u|^{2} \leq 0
\end{equation}
and
\begin{equation}\label{nabla u nabla f nabla f nabla vp}
\begin{split}
& \gamma u^{-1}\varphi\langle\nabla u,\nabla f\rangle+\langle\nabla_{\partial\Omega} f,\nabla_{\partial \Omega}\varphi\rangle \\
= {} & \gamma u^{-\gamma-1}\big(\langle\nabla u,\nabla f\rangle-\langle\nabla_{\partial \Omega}u,\nabla_{\partial\Omega} f\rangle\big)
=  \gamma u^{-\gamma-1}u_{\nu}f_{\nu}.
\end{split}
\end{equation}
Combining the above with $|\mathrm{II}|^2\geq \frac{H^2}{n-1}$ and $\gamma \Delta_f u-u\Ric_f^N \leq -(n-1)\lambda u$,
\[
\begin{split}
0 \leq \int_{\partial \Omega}
\Big(& -\frac{H^2}{n-1}u^{-\gamma}-(n-1)\lambda u^{-\gamma}-\frac{1}{N}f_{\nu}^{2}u^{-\gamma}+\gamma u^{-\gamma-1}u_{\nu}f_{\nu}\\
& -\gamma u^{-\gamma-2}u_\nu^2-\gamma u^{-\gamma-1}Hu_\nu -h_{\nu}u^{\alpha-\gamma}e^{-kf}u^{-\gamma}\\
& -(\alpha-\gamma)hu^{\alpha-\gamma-1}u_\nu e^{-kf}u^{-\gamma}+khu^{\alpha-\gamma}e^{-kf}f_{\nu}u^{-\gamma}\Big)e^{-f} \\
\leq \int_{\partial \Omega}
\Big(& -\frac{H^2}{n-1}-(n-1)\lambda-\frac{1}{N}f_{\nu}^{2}+\gamma u^{-1}u_{\nu}f_{\nu}\\
& -\gamma u^{-2}u_\nu^2-\gamma u^{-1}Hu_\nu +|\nabla h|u^{\alpha-\gamma}e^{-kf}\\
& -(\alpha-\gamma)hu^{\alpha-\gamma-1}u_\nu e^{-kf}+khu^{\alpha-\gamma}e^{-kf}f_{\nu}\Big)u^{-\gamma}e^{-f}.
\end{split}
\]
Set $X=hu^{\alpha-\gamma}e^{-kf}$ and $Y=u^{-1}u_{\nu}$. By \eqref{H}, we have $H=f_{\nu}+X-\gamma Y$. Direct calculation shows
\[
\begin{split}
0 \leq \int_{\de\Omega}\bigg[& -\frac{(f_{\nu}+X-\gamma Y)^2}{n-1}-(n-1)\lambda-\frac{1}{N}f_{\nu}^2 \\
& +\gamma Yf_\nu-\gamma Y^2  -\gamma Y(f_\nu+X-\gamma Y)\\
&+|\nabla h|u^{\alpha-\gamma}e^{-kf}+(\gamma-\alpha)XY+kXf_\nu  \bigg] u^{-\gamma}e^{-f} \\
= \int_{\partial \Omega}
\bigg[& -\Big(\frac{1}{n-1}+\frac{1}{N}\Big)f_{\nu}^2+\Big(k-\frac{2}{n-1}\Big)Xf_{\nu}+\frac{2\gamma}{n-1}Yf_{\nu} \\
& -\frac{X^{2}}{n-1}+\Big(\frac{2\gamma}{n-1}-\alpha\Big)XY+\Big(\frac{n-2}{n-1}\gamma^2-\gamma\Big)Y^2 \\
& +|\nabla h|u^{\alpha-\gamma}e^{-kf}-(n-1)\lambda\bigg]u^{-\gamma}e^{-f}.
\end{split}
\]
It is clear that
\[
\begin{split}
& -\Big(\frac{1}{n-1}+\frac{1}{N}\Big)f_{\nu}^2+\Big(k-\frac{2}{n-1}\Big)Xf_{\nu}+\frac{2\gamma}{n-1}Yf_{\nu} \\
\leq {} & \frac{1}{4}\cdot\Big(\frac{1}{n-1}+\frac{1}{N}\Big)^{-1}\left[\Big(k-\frac{2}{n-1}\Big)X+\frac{2\gamma}{n-1}Y\right]^{2} \\
= {} & \frac{N\big((n-1)k-2\big)^{2}}{4(N+n-1)(n-1)}X^{2}+\frac{N\big((n-1)k-2\big)\gamma}{(N+n-1)(n-1)}XY \\
& +\frac{N\gamma^{2}}{(N+n-1)(n-1)}Y^{2}
\end{split}
\]
and so
\[
\begin{split}
0 \leq \int_{\de\Omega}\bigg[ & \frac{\big((n-1)k^2-4k\big)N-4}{4(N+n-1)}X^2+\Big(\frac{kN+2}{N+n-1}\gamma-\alpha\Big)XY \\
& +\Big(\frac{N+n-2}{N+n-1}\gamma^2-\gamma\Big)Y^2+|\nabla h|u^{\alpha-\gamma}e^{-kf}-(n-1)\lambda\bigg]u^{-\gamma}e^{-f}.
\end{split}
\]
Since $\alpha=\frac{kN+2}{N+n-1}\gamma$ and $0\leq \gamma \leq\frac{N+n-1}{N+n-2}$, then
\[
0 \leq \int_{\partial \Omega}\bigg[-\frac{\big(4k-(n-1)k^2\big)N+4}{4(N+n-1)}X^2+|\nabla h|u^{\alpha-\gamma}e^{-kf}-(n-1)\lambda\bigg]u^{-\gamma}e^{-f}.
\]
Recalling $X=hu^{\alpha-\gamma}e^{-kf}$, we obtain
\[
\begin{split}
0 \leq {}  \int_{\partial \Omega}\Big(& -\frac{\big(4k-(n-1)k^2\big)N+4}{4(N+n-1)}u^{2\alpha-2\gamma}e^{-2kf}h^{2} \\
& +|\nabla h|u^{\alpha-\gamma}e^{-kf}-(n-1)\lambda
\Big)u^{-\gamma}e^{-f} \\
= {}  \int_{\partial \Omega}\Big(& -\frac{\big(4k-(n-1)k^2\big)N+4}{4(N+n-1)}u^{2\alpha-2\gamma}e^{-kf}h^{2} \\
& +|\nabla h|u^{\alpha-\gamma}-(n-1)\lambda e^{kf}
\Big)u^{-\gamma}e^{-(k+1)f},
\end{split}
\]
as required.
\end{proof}

\begin{lemma}\label{h lemma}
Let $(M,g)$ be a complete Riemannian manifold and $C$, $D$ be two positive constants. If $\diam(M,g)>\frac{\pi}{\sqrt{CD}}$, then there exist two domains $\Omega_{-}\subset\Omega_{+}\subset M$ and function $h\in C^{\infty}(\Omega_{+}\setminus\ov{\Omega_{-}})$ such that
\begin{itemize}\setlength{\itemsep}{1mm}
\item $\de\Omega_{-}\neq\emptyset$, $\de\Omega_{+}\neq\emptyset$ and $\overline{\Omega_+}\setminus\Omega_-$ is compact;
\item $\lim_{x\to\de\Omega_{-}}h(x)=+\infty$ and $\lim_{x\to\de\Omega_{+}}h(x)=-\infty$ uniformly;
\item $|\nabla h|<Ch^{2}+D$.
\end{itemize}
\end{lemma}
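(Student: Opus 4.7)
The plan is to build $h$ as a pullback $\phi \circ \rho$, where $\rho \in C^{\infty}(M)$ is a smooth approximation of the distance function $d(p,\cdot)$ for a suitably chosen basepoint $p$, and $\phi$ is an explicit rescaled tangent function on a target interval $(a_-, a_+)$. This reduces the gradient inequality $|\nabla h| < Ch^2 + D$ to an ODE inequality for $\phi$, and the strict hypothesis $\diam(M,g) > \pi/\sqrt{CD}$ furnishes the slack needed both to fit $\phi$ on a sufficiently long interval and to absorb the loss from $|\nabla \rho|$ being slightly larger than $1$.

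For the one-variable model, I would let $E = \pi/(a_+ - a_-)$ and take
\[
\phi(t) = A \tan\!\bigl(\tfrac{\pi}{2} - E(t-a_-)\bigr), \qquad t \in (a_-, a_+).
\]
Then $\phi$ is strictly decreasing with $\phi(t) \to +\infty$ at $a_-$ and $\phi(t) \to -\infty$ at $a_+$, and a direct computation gives $\phi'(t) = -AE - (E/A)\phi(t)^2$. Hence $|\phi'| < C\phi^2 + D$ holds uniformly in $\phi \in \mathbb{R}$ whenever the amplitude $A$ satisfies $E/A < C$ and $AE < D$ simultaneously. Such an $A$ exists in the nonempty interval $(E/C,\, D/E)$ exactly when $E < \sqrt{CD}$, i.e.\ precisely when $a_+ - a_- > \pi/\sqrt{CD}$.

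To realize this interval intrinsically on $M$, I would use $\diam(M,g) > \pi/\sqrt{CD}$ to pick $p,q \in M$ with $d(p,q) > \pi/\sqrt{CD}$, and fix $\eta > 0$ sufficiently small that $d(p,q) > (1+\eta)\pi/\sqrt{CD} + 4\eta$. By Greene--Wu type smoothing of Lipschitz functions on Riemannian manifolds (mollification in normal coordinates patched via a partition of unity), I obtain $\rho \in C^{\infty}(M)$ with
\[
|\rho - d(p,\cdot)| \leq \eta \quad \text{and} \quad |\nabla \rho| \leq 1+\eta \quad \text{on } M.
\]
By completeness of $M$, the function $d(p,\cdot)$ is proper, hence so is $\rho$, so its sublevel sets have compact closure. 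Sard's theorem lets me choose regular values $a_- \in (\eta, 2\eta)$ and $a_+ \in (d(p,q)-2\eta,\, d(p,q)-\eta)$ of $\rho$, which forces $a_+ - a_- > d(p,q) - 4\eta > (1+\eta)\pi/\sqrt{CD}$. Setting $\Omega_\pm := \{\rho < a_\pm\}$, both sets are open with smooth nonempty boundary (since $\rho(p) < a_-$ and $\rho(q) > a_+$), and $\overline{\Omega_+} \setminus \Omega_-$ is compact.

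Finally, I would define $h := \phi \circ \rho$ on $\Omega_+ \setminus \overline{\Omega_-}$, calibrating $A$ to satisfy the strengthened conditions $(1+\eta)E/A < C$ and $(1+\eta)AE < D$, which is possible because $a_+ - a_- > (1+\eta)\pi/\sqrt{CD}$. Then $h$ is smooth and tends uniformly to $\pm\infty$ at $\partial\Omega_{\mp}$, while
\[
|\nabla h| = |\phi'(\rho)|\,|\nabla \rho| \leq (1+\eta)\bigl(AE + (E/A)\phi(\rho)^2\bigr) < Ch^2 + D.
\]
The only nontrivial technical ingredient is the Lipschitz-preserving smooth approximation of the distance function; I expect this, rather than the ODE calibration, to be the main obstacle, though it is a standard result in the folklore of smoothing theory.
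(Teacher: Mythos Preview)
Your proposal is correct and matches the standard construction that the paper defers to by citing \cite[Lemma 1]{AX17}: the paper gives no argument of its own here, and the approach in Antonelli--Xu is precisely the one you describe --- smooth the distance function from a basepoint, pick regular values to cut out $\Omega_\pm$, and pull back a rescaled tangent profile, using the strict inequality $\diam(M,g)>\pi/\sqrt{CD}$ to absorb the $(1+\eta)$ loss in $|\nabla\rho|$. Your calibration of the one-variable model and the bookkeeping with $\eta$ are accurate; the only point worth noting is that connectedness of $M$ (implicit throughout the paper) is what guarantees $\{\rho=a_\pm\}\neq\emptyset$, and the Greene--Wu smoothing need only be performed on the compact set $\overline{B(p,d(p,q))}$, which sidesteps any concern about global approximation on a noncompact $M$.
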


\begin{proof}
This lemma is proved in the argument of \cite[Lemma 1]{AX17}.
\end{proof}

Now we are in a position to prove diameter comparison when $3\leq n\leq7$.

\begin{proof}[Proof of (1) in Theorem \ref{main theorem} when $3\leq n\leq7$]
When the constant $k$ satisfies
\[
\text{$\frac{kN+2}{N+n-1}\leq1$ and $\frac{\big(4k-(n-1)k^2\big)N+4}{4(N+n-1)}>0$},
\]
we have $\alpha=\frac{kN+2}{N+n-1}\gamma\leq\gamma$ and define two positive constants by
\[
C = \frac{\big(4k-(n-1)k^2\big)N+4}{4(N+n-1)}\cdot\frac{\left(\sup_{M}u\right)^{2\alpha-2\gamma}}{\left(\inf_{M}u\right)^{\alpha-\gamma}} \cdot e^{-|k|F}
\]
and
\[
D = (n-1)\lambda \left(\inf_{M}u\right)^{\gamma-\alpha}e^{-|k|F}.
\]
Then Lemma \ref{h inequality} shows
\[
\begin{split}
0 \leq {} & \int_{\partial \Omega}\Big(|\nabla h|u^{\alpha-\gamma}-C\big(\inf_{M}u\big)^{\alpha-\gamma}h^{2}-D\big(\inf_{M}u\big)^{\alpha-\gamma}\Big)u^{-\gamma}e^{-(k+1)f} \\
\leq {} & \int_{\partial \Omega}\Big(|\nabla h|-Ch^{2}-D\Big)u^{-\alpha-2\gamma}e^{-(k+1)f}.
\end{split}
\]
If $\diam(M,g)>\frac{\pi}{\sqrt{CD}}$, then we choose $h$ as in Lemma \ref{h lemma} and obtain a contradiction. Thus $\diam(M,g)\leq\frac{\pi}{\sqrt{CD}}$ and so
\[
\diam(M,g)\leq \left(\frac{\sup_{M}u}{\inf_{M}u}\right)^{\gamma-\alpha}
\cdot \sqrt{\frac{4(N+n-1)}{\big(4k-(n-1)k^2\big)N+4}}\cdot\sqrt{\frac{e^{2|k|F}}{n-1}} \cdot \frac{\pi}{\sqrt{\lambda}}.
\]
When $N>0$, we choose $k=0$. Then $\alpha=\frac{2\gamma}{N+n-1}$ and
\[
\diam(M,g) \leq \left(\frac{\sup_{M}u}{\inf_{M}u}\right)^{\frac{N+n-3}{N+n-1}\gamma}
\cdot \sqrt{\frac{N+n-1}{n-1}} \cdot \frac{\pi}{\sqrt{\lambda}}.
\]
When $N<-(n-1)$, we choose $k=\frac{2}{n-1}$. Then $\alpha=\frac{2\gamma}{n-1}$ and
\[
\diam(M,g) \leq \left(\frac{\sup_{M}u}{\inf_{M}u}\right)^{\frac{n-3}{n-1}\gamma}
\cdot e^{\frac{2}{n-1}F} \cdot \frac{\pi}{\sqrt{\lambda}}.
\]
\end{proof}

\subsection{The case $n\geq8$}
In this case, $\de\Omega$ may have singularities. The diameter comparison can be proved by modifying the argument (of the case $3\leq n\leq 7$) verbatim as in \cite[Proof of Lemma 1 ($n\geq8$) in Appendix A]{AX17}.

\section{Global weighted volume comparison}

To establish the global weighted volume comparison, we first recall the following ODE comparison (see e.g. \cite[Lemma 4]{AX17}).
\begin{lemma}\label{ODE comparison}
Let $V_{1}\in(0,+\infty)$ be a constant and $J:[0,V_{1})\to\mathbb{R}$ be a continuous function such that
\begin{itemize}\setlength{\itemsep}{1mm}
\item[(a)] $J(0)=0$ and $J(v)>0$ for $v\in(0,V_{1})$;
\item[(b)] $\limsup_{v\to0^{+}}v^{-\frac{n-1}{n}}J(v)\leq n\vol(\mathbb{B}^{n})^{\frac{1}{n}}$.
\item[(c)] $J''J\leq-\frac{(J')^{2}}{n-1}-(n-1)\Lambda$ in the viscosity sense on $(0,V_{1})$ for some positive constant $\Lambda$;
\end{itemize}
Then $V_{1}\leq\Lambda^{-\frac{n}{2}}\vol(\mathbb{S}^{n})$.
\end{lemma}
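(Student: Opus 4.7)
The plan is to reduce the quasilinear inequality (c) to a very simple linear inequality via a change of variables, and then integrate explicitly to recover the extremal sphere volume.

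First, I would introduce the new independent variable $\tilde r(v) := \int_0^v J(s)^{-(n-2)/(n-1)}\,ds$ and set $\tilde J(\tilde r) := J(v(\tilde r))^{1/(n-1)}$, $\psi := \tilde J^2 = J^{2/(n-1)}$. A direct computation (using $dv/d\tilde r = J^{(n-2)/(n-1)}$) gives
\[
\frac{d^2\psi}{d\tilde r^2} = \frac{2}{n-1}\!\left(J J'' + \frac{(J')^2}{n-1}\right),
\]
where primes on $J$ denote $d/dv$. Hence condition (c) becomes $\psi''(\tilde r) \leq -2\Lambda$ in the viscosity sense on $[0, R_1)$, where $R_1 := \lim_{v \to V_1^-}\tilde r(v) \in (0,+\infty]$.

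The inequality $\psi'' \leq -2\Lambda$ is equivalent to concavity of $\psi(\tilde r) + \Lambda \tilde r^2$. Together with $\psi(0)=0$, this yields the pointwise bound $\psi(\tilde r) \leq A \tilde r - \Lambda \tilde r^2$, where $A := \psi'_+(0) \in [0,+\infty]$. A short asymptotic analysis near $v=0$ based on (b) bounds $A$: with $c_0 := n\,\vol(\mathbb{B}^n)^{1/n}$, assumption (b) gives $J(v) \leq (c_0+\varepsilon) v^{(n-1)/n}$ near zero, hence $\tilde r(v) \geq (c_0+\varepsilon)^{-(n-2)/(n-1)}\tfrac{n}{2} v^{2/n}$ and $\psi(\tilde r) \leq (c_0+\varepsilon)^{2/(n-1)} v^{2/n}$, so that $\psi/\tilde r \leq \tfrac{2}{n}(c_0+\varepsilon)^{n/(n-1)}$; letting $\varepsilon \to 0$ yields $A \leq 2\, n^{1/(n-1)}\,\vol(\mathbb{B}^n)^{1/(n-1)}$. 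The nonnegativity of $\psi$ on $[0,R_1)$ from (a) then forces $R_1 \leq A/\Lambda$.

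Finally, inverting the change of variables gives $dv/d\tilde r = \psi^{(n-2)/2}$, so
\[
V_1 = \int_0^{R_1} \psi^{(n-2)/2}\,d\tilde r
\leq \int_0^{A/\Lambda} \bigl(A\tilde r - \Lambda\tilde r^2\bigr)^{(n-2)/2}\,d\tilde r
= \frac{A^{n-1}}{\Lambda^{n/2}}\,B\!\left(\tfrac{n}{2},\tfrac{n}{2}\right),
\]
after the substitution $\tilde r = (A/\Lambda)\,t$. Inserting the bound on $A$ and simplifying the Beta function via the Legendre duplication formula (which gives $2^{n-1} n\,\vol(\mathbb{B}^n)\,B(n/2,n/2) = \vol(\mathbb{S}^n)$) yields the desired $V_1 \leq \Lambda^{-n/2}\vol(\mathbb{S}^n)$.

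The main technical obstacle will be transferring the viscosity sense of (c) to $\psi$ through a change of variables whose derivative $d\tilde r/dv$ blows up at $v=0$, and reading off $A = \psi'_+(0)$ rigorously from the $\limsup$ hypothesis in (b); both issues are handled by working with the right derivative of the concave function $\psi + \Lambda \tilde r^2$ at $0$ and using the monotonicity of secant slopes of concave functions from the origin.
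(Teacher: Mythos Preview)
The paper does not supply its own proof of this lemma: it is quoted verbatim from \cite[Lemma~4]{AX17}, so there is nothing to compare against beyond that reference. Your proposal is a correct, self-contained argument, and in fact it is essentially the standard route used in \cite{AX17} (and, before that, in \cite{Bray97,BGLZ19}): the substitution $\psi=J^{2/(n-1)}$ together with the reparametrisation $d\tilde r = J^{-(n-2)/(n-1)}\,dv$ is precisely what turns the weighted isoperimetric inequality into the radial profile of the model sphere, so your identity $\psi''(\tilde r)=\tfrac{2}{n-1}\bigl(JJ''+\tfrac{(J')^2}{n-1}\bigr)$ and the ensuing bound $\psi''\le -2\Lambda$ are the expected intermediate steps. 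The Beta-function evaluation and the bound $A\le 2\,n^{1/(n-1)}\vol(\mathbb{B}^n)^{1/(n-1)}$ are both computed correctly, and the duplication identity you quote indeed yields $2^{n-1}n\,\vol(\mathbb{B}^n)\,B(n/2,n/2)=\vol(\mathbb{S}^n)$.

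One small point you should make explicit before the change of variables: the very definition of $\tilde r(v)=\int_0^v J^{-(n-2)/(n-1)}$ requires the integrand to be integrable near $0$, and hypothesis~(b) only gives an \emph{upper} bound on $J$ there. This is easily filled in once you observe that (c) already forces $J^{n/(n-1)}$ to be concave on $(0,V_1)$; since it vanishes at $0$ and is positive, its secant slopes from the origin are nonincreasing, giving a linear \emph{lower} bound $J^{n/(n-1)}(v)\ge cv$ on any $[0,v_0]$ and hence the needed integrability of $J^{-(n-2)/(n-1)}$ near $0$. With that remark added, the two technical issues you flag at the end (transferring the viscosity inequality through the $C^1$ change of variables on $(0,V_1)$, and reading off $A=\psi'_+(0)$ from the secant-slope monotonicity of the concave function $\psi+\Lambda\tilde r^2$) are handled exactly as you indicate.
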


We assume without loss of generality that $\inf_{M}u=1$. Set
\[
\alpha = \frac{2\gamma}{n-1}, \ \ k = \frac{2}{n-1}.
\]
By (1) of Theorem \ref{main theorem}, $M$ is compact and so $V_{0}:=\int_{M}u^\alpha e^{-(k+1)f}$ is finite. Define the weighted isoperimetric profile:
\[
I(v) = \inf\left\{ \int_{\de^{*}E}u^\gamma e^{-f}: \text{$E\Subset M$ has finite perimeter, and $\int_{E}u^\alpha e^{-(k+1)f}=v$} \right\}
\]
for all $v\in[0,V_{0})$, where $\de^{*}E$ denotes the reduced boundary of $E$. It is clear that
\begin{equation}\label{ODE comparison condition a}
\text{$I(0)=0$ and $I(v)>0$ for $v\in(0,V_{0})$}.
\end{equation}
Using an argument similar to \cite[Proposition 5.3]{CLMS24}, $I$ is a continuous function on $[0,V_{0})$.

Since $\inf_{M}u=1$ and $M$ is compact, then there exists a point $x_{0}\in M$ such that $u(x_{0})=1$. When $r\ll1$, we have the asymptotical expansion:
\[
\int_{B_{r}(x_{0})}e^{-(k+1)f} = \vol(\mathbb{B}^{n})r^{n}e^{-(k+1)f(x_{0})}+O(r^{n+1})
\]
and
\[
\int_{\de B_{r}(x_{0})} e^{-f} = n\vol(\mathbb{B}^{n})r^{n-1}e^{-f(x_{0})}+O(r^{n}).
\]
By the definition of $I$, when $v\ll1$, we have
\[
\begin{split}
I(v) \leq {} & n\vol(\mathbb{B}^{n})^{\frac{1}{n}}v^{\frac{n-1}{n}}e^{\frac{(n-1)(k+1)-n}{n}f(x_{0})}+o(v^{\frac{n-1}{n}}) \\
= {} & n\vol(\mathbb{B}^{n})^{\frac{1}{n}}v^{\frac{n-1}{n}}e^{\frac{1}{n}f(x_{0})}+o(v^{\frac{n-1}{n}}) \\
\leq {} & n\vol(\mathbb{B}^{n})^{\frac{1}{n}}v^{\frac{n-1}{n}}e^{\frac{1}{n}F}+o(v^{\frac{n-1}{n}}),
\end{split}
\]
where we used $k=\frac{2}{n-1}$ in the second line. This shows
\begin{equation}\label{ODE comparison condition b}
\displaystyle{\limsup_{v\to0^{+}}}\,v^{-\frac{n-1}{n}}I(v)\leq n\vol(\mathbb{B}^{n})^{\frac{1}{n}}e^{\frac{1}{n}F}.
\end{equation}

For each $v_{0}\in(0,V_{0})$ and let $E$ be the volume-constrained minimizer of $v_{0}$, i.e.
\[
\int_{E}u^\alpha e^{-(k+1)f} = v_{0}, \ \ \ \int_{\de^{*}E}u^\gamma e^{-f} = I(v_{0}).
\]
The classical geometric measure theory (see e.g. \cite[Section 3.10]{Morgan03}) shows that $\de E$ is smooth when $n\leq 7$ while the singular part of $\de E$ has Hausdorff dimension no more than $n-8$.

\subsection{The case $3\leq n\leq7$}

\begin{lemma}\label{ODE comparison condition c}
The function $I$ satisfies
\[
\text{$I''I\leq-\frac{(I')^{2}}{n-1}-(n-1)\lambda e^{-2kF}$ in the viscosity sense on $(0,V_{0})$.}
\]
\end{lemma}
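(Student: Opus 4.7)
The plan is to establish the viscosity inequality at each $v_{0}\in(0,V_{0})$ by constructing an explicit $C^{2}$ upper barrier $\phi$ for $I$ tangent at $v_{0}$, whose second derivative is controlled by the second-variation calculation already carried out in Lemma~\ref{h inequality}. Fix $v_{0}$ and let $E$ be a volume-constrained minimizer realizing $I(v_{0})$; since $3\leq n\leq 7$ its boundary is smooth. The first variation of the constrained problem produces a Lagrange multiplier, i.e.\ a constant $\mu$ with $H+\gamma u^{-1}u_{\nu}-f_{\nu}=\mu\,u^{\alpha-\gamma}e^{-kf}$ on $\partial E$. Comparing with \eqref{H} identifies the function $h$ there with the constant $\mu$, and a standard envelope argument (take any variation with $V'(0)=1$) gives $\mu=I'(v_{0})$.

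Next, consider the normal variation $\{\Omega_{t}\}$ of $E$ with initial speed $\vp=u^{-\gamma}$, and define $V(t):=\int_{\Omega_{t}}u^{\alpha}e^{-(k+1)f}$ and $P(t):=\int_{\de\Omega_{t}}u^{\gamma}e^{-f}$. Since $A:=V'(0)=\int_{\de E}u^{\alpha-\gamma}e^{-(k+1)f}>0$, the map $V$ is a local $C^{\infty}$-diffeomorphism near $0$, and $\phi(v):=P(V^{-1}(v))$ is a $C^{2}$ function near $v_{0}$ with $\phi(v_{0})=I(v_{0})$ and $\phi\geq I$ locally (by the definition of $I$), satisfying $\phi'(v_{0})=I'(v_{0})$ and
\[
\phi''(v_{0})=\frac{P''(0)-I'(v_{0})\,V''(0)}{A^{2}}.
\]
The numerator is the second variation at $E$ of the functional $\int_{\de\Omega}u^{\gamma}e^{-f}-I'(v_{0})\int_{\Omega}u^{\alpha}e^{-(k+1)f}$, i.e.\ the $E(\Omega)$ of Section~2 with $h$ replaced by the constant $I'(v_{0})$. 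Since the pointwise inequalities in the proof of Lemma~\ref{h inequality} (the trace bound $|\mathrm{II}|^{2}\geq H^{2}/(n-1)$ and the $\Ric_{f}^{N}$ hypothesis) are purely algebraic, the same chain of estimates applies; with $|\nabla h|=0$ (since $h$ is constant) and $k=\tfrac{2}{n-1}$, the coefficient $\tfrac{(4k-(n-1)k^{2})N+4}{4(N+n-1)}$ collapses to $\tfrac{1}{n-1}$ in both the $N>0$ and $N<-(n-1)$ cases, yielding
\[
P''(0)-I'(v_{0})\,V''(0)\leq-\frac{(I'(v_{0}))^{2}}{n-1}\int_{\de E}u^{2\alpha-3\gamma}e^{-(2k+1)f}-(n-1)\lambda\int_{\de E}u^{-\gamma}e^{-f}.
\]

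Multiplying by $I(v_{0})=\int_{\de E}u^{\gamma}e^{-f}$ and applying Cauchy--Schwarz with the pairs $(u^{\alpha-3\gamma/2}e^{-(2k+1)f/2},\,u^{\gamma/2}e^{-f/2})$ and $(u^{-\gamma/2}e^{-f/2},\,u^{\gamma/2}e^{-f/2})$ gives
\[
A^{2}\leq\Big(\int_{\de E}u^{2\alpha-3\gamma}e^{-(2k+1)f}\Big)\Big(\int_{\de E}u^{\gamma}e^{-f}\Big),
\]
\[
\Big(\int_{\de E}e^{-f}\Big)^{2}\leq\Big(\int_{\de E}u^{-\gamma}e^{-f}\Big)\Big(\int_{\de E}u^{\gamma}e^{-f}\Big).
\]
Since $u\geq 1$ and $\alpha-\gamma=-\tfrac{n-3}{n-1}\gamma\leq 0$ force $u^{\alpha-\gamma}\leq 1$, while $|f|\leq F$ with $k>0$ forces $e^{-kf}\leq e^{kF}$, we obtain $A\leq e^{kF}\int_{\de E}e^{-f}$, i.e.\ $\int_{\de E}e^{-f}\geq A\,e^{-kF}$. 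Combining these ingredients produces $\phi''(v_{0})\,I(v_{0})\leq-\tfrac{(I'(v_{0}))^{2}}{n-1}-(n-1)\lambda e^{-2kF}$, which is the claimed viscosity inequality, witnessed by the $C^{2}$ upper barrier $\phi$ at $v_{0}$. The main obstacle is the algebraic balancing in this last step: the two Cauchy--Schwarz splittings and the $|f|\leq F$ estimate must combine to absorb the $u$- and $f$-weights exactly, and this is precisely what forces the choices $k=\tfrac{2}{n-1}$ and $\alpha=\tfrac{2\gamma}{n-1}$ fixed at the beginning of Section~3.
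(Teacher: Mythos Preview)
Your proof is correct and follows essentially the same route as the paper's: construct the upper barrier $\phi$ (the paper's $G$) from the constrained minimizer via the normal variation $\varphi=u^{-\gamma}$, bound $P''(0)-\mu V''(0)$ by the same pointwise inequalities (you efficiently recycle the algebra of Lemma~\ref{h inequality} with $h\equiv\mu$ and $|\nabla h|=0$, whereas the paper redoes the computation), and close with Cauchy--Schwarz. One notational caveat: $I$ need not be differentiable at $v_{0}$, so your ``$I'(v_{0})$'' should everywhere be read as $\phi'(v_{0})=\mu$ (the Lagrange multiplier), which is all the argument actually requires; your final Cauchy--Schwarz splitting differs cosmetically from the paper's (you pass through $\int_{\partial E}e^{-f}$, the paper compares $\int u^{-\gamma}e^{-f}$ directly to $\int u^{2\alpha-3\gamma}e^{-(2k+1)f}$), but both yield the same bound.
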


\begin{proof}
Fix $v_{0}\in(0,V_{0})$ and let $E$ be the above volume-constrained minimizer of $v_{0}$. For any $\vp\in C^{\infty}(M)$, let $\{E_{t}\}_{t\in(-\ve,\ve)}$ be a smooth family of sets satisfying
\begin{itemize}\setlength{\itemsep}{2mm}
\item $E_{0}=E$;
\item $\frac{\de E_{t}}{\de t}=\vp\nu$, where $\nu$ denotes the outer unit normal of $\de E_{t}$.
\end{itemize}
Write
\[
V(t) = \int_{E_{t}}u^\alpha e^{-(k+1)f}, \ \ \
A(t) = \int_{\de E_{t}}u^\gamma e^{-f}.
\]
Direct calculation shows
\[
V'(0) = \int_{\de E}u^\alpha e^{-(k+1)f}\vp,
\]
\[
V''(0) = \int_{\de E}\big(H+\alpha u^{-1}u_{\nu}-(k+1)f_{\nu}\big)u^\alpha e^{-(k+1)f}\vp^{2}+u^{\alpha}e^{-(k+1)f}\vp\vp_{\nu}
\]
and
\[
A'(0) = \int_{\de E}(H-f_{\nu}+\gamma u^{-1}u_\nu)u^\gamma e^{-f}\vp,
\]
\[
\begin{split}
A''(0) = \int_{\de E}\Big( & -\Delta_{\de E}\vp-\Ric(\nu,\nu)\vp-|\mathrm{II}|^{2}\vp-\mathrm{Hess}f(\nu,\nu)\vp \\
& +\langle\nabla_{\de E}f,\nabla_{\de E}\vp\rangle +\gamma u^{-1}\vp(\Delta u-\Delta_{\de E}u-Hu_\nu)\\
&-\gamma u^{-2}u_\nu^2\vp-\gamma u^{-1}\langle\nabla_{\de\Omega}u,\nabla_{\de\Omega}\vp\rangle\Big)u^\gamma e^{-f}\vp\\[1mm]
& +\big(H-f_{\nu}+\gamma u^{-1}u_\nu \big)\big(\gamma u^{-1}u_\nu \vp-f_\nu \vp+\vp_\nu+H\vp\big)u^{\gamma}e^{-f}\vp.
\end{split}
\]
For later use, we simplify the expression of $A''(0)$. By the same calculation of \eqref{Ric Delta u Hess f}, we have
\[
\begin{split}
& -\Ric(\nu,\nu)\varphi+\gamma u^{-1}\varphi\Delta u-\mathrm{Hess} f(\nu,\nu)\vp \\
= {} & \big(\gamma\Delta_{f}u-u\Ric_{f}^{N}(\nu,\nu)\big)u^{-1}\varphi-\frac{1}{N}f_{\nu}^{2}\vp+\gamma u^{-1}\varphi\langle\nabla u,\nabla f\rangle.
\end{split}
\]
Then
\[
\begin{split}
A''(0) = \int_{\de E}\Big( & -\Delta_{\de E}\vp-|\mathrm{II}|^{2}\vp+\big(\gamma\Delta_{f}u-u\Ric_{f}^{N}(\nu,\nu)\big)u^{-1}\varphi-\frac{1}{N}f_{\nu}^{2}\vp \\
& +\gamma u^{-1}\varphi\langle\nabla u,\nabla f\rangle+\langle\nabla_{\de E}f,\nabla_{\de E}\vp\rangle-\gamma u^{-2}u_\nu^2\vp \\[1mm]
& +\gamma u^{-1}\vp(-\Delta_{\de E}u-Hu_\nu)-\gamma u^{-1}\langle\nabla_{\de\Omega}u,\nabla_{\de\Omega}\vp\rangle \\
& +(H-f_{\nu}+\gamma u^{-1}u_\nu \big)\big(\gamma u^{-1}u_\nu \vp-f_\nu \vp+\vp_\nu+H\vp)\Big)u^\gamma e^{-f}\vp.
\end{split}
\]

\medskip

Since $E$ is a volume-constrained minimizer, we know that if
\[
V'(0) = \int_{\de E}u^\alpha e^{-(k+1)f}\vp = 0,
\]
then
\[
A'(0) = \int_{\de E}(H-f_{\nu}+\gamma u^{-1}u_\nu)u^{\gamma-\alpha}e^{kf} \cdot u^\alpha e^{-(k+1)f}\vp = 0.
\]
This shows
\[
\text{$(H-f_{\nu}+\gamma u^{-1}u_\nu)u^{\gamma-\alpha}e^{kf} = \mathrm{constant}$ on $E$.}
\]

\medskip

Next, we choose $\vp=u^{-\gamma}$ and so
\[
V'(0) = \int_{\de E}u^{\alpha-\gamma} e^{-(k+1)f} > 0.
\]
Then near $0$, $V(t)$ has the inverse function and we define $G(v)=A(V^{-1}(v))$. Observe that
\[
\text{$G(v_{0})=I(v_{0})$ and $G(v)\geq I(v)$ near $v_{0}$}.
\]
Thus $G$ is a upper barrier of $I$ at $v_{0}$. To prove Lemma \ref{ODE comparison condition c}, it then suffices to show
\begin{equation}\label{goal G}
G(v_{0})G''(v_{0}) \leq -\frac{G'^{2}(v_{0})}{n-1}-(n-1)\lambda e^{-2kF}.
\end{equation}
Using the chain rule and the formula for the derivative of inverse function, we compute
\begin{equation}\label{G'}
G'(v_{0}) = A'(0)(V^{-1})'(v_{0}) = \frac{A'(0)}{V'(0)} = (H-f_{\nu}+\gamma u^{-1}u_\nu)u^{\gamma-\alpha}e^{kf}
\end{equation}
and
\[
G''(v_{0}) = \frac{A''(0)}{V'^{2}(0)}-\frac{A'(0)V''(0)}{V'^{3}(0)}
= \frac{A''(0)-G'(v_{0})V''(0)}{V'^{2}(0)}.
\]
We split the argument of \eqref{goal G} into two steps:

\bigskip
\noindent
{\bf Step 1.} Compute $V''(0)$ and $A''(0)$.
\bigskip

It is clear that
\[
V''(0) = \int_{\de E}\Big(H-(k+1)f_{\nu}+(\alpha-\gamma)u^{-1}u_{\nu}\Big)u^{\alpha-2\gamma}e^{-(k+1)f}.
\]
For $A''(0)$, the same calculations of \eqref{Delta vp Delta u nabla u nabla vp} and \eqref{nabla u nabla f nabla f nabla vp} show
\[
-\Delta_{\partial E}\varphi-\gamma u^{-1}\varphi\Delta_{\partial E}u
-\gamma u^{-1}\langle\nabla_{\partial E}u,\nabla_{\partial E}\varphi\rangle
= -\gamma u^{-\gamma-2}|\nabla_{\de E}u|^{2} \leq 0
\]
and
\[
\gamma u^{-1}\varphi\langle\nabla u,\nabla f\rangle+\langle\nabla_{\partial E}f,\nabla_{\partial E}\varphi\rangle
= \gamma u^{-\gamma-1}u_{\nu}f_{\nu}.
\]
Combining the above with $|\mathrm{II}|^2\geq \frac{H^2}{n-1}$ and $\gamma \Delta_f u-u\Ric_f^N \leq -(n-1)\lambda u$, we obtain
\[
\begin{split}
A''(0) \leq \int_{\de E}\Big( & -\frac{H^{2}}{n-1}u^{-\gamma}-(n-1)\lambda u^{-\gamma}-\frac{1}{N}f_{\nu}^{2}u^{-\gamma} \\[1mm]
& +\gamma u^{-\gamma-1}u_{\nu}f_{\nu}-\gamma u^{-\gamma-2}u_\nu^2-\gamma u^{-\gamma-1}Hu_\nu \\[1mm]
& +(H-f_{\nu}+\gamma u^{-1}u_\nu)(-f_\nu u^{-\gamma}+Hu^{-\gamma}) \Big)e^{-f} \\
= \int_{\de E}\Big( & -\frac{H^{2}}{n-1}-(n-1)\lambda-\frac{1}{N}f_{\nu}^{2}+\gamma u^{-1}u_{\nu}f_{\nu}-\gamma u^{-2}u_\nu^2 \\
& -\gamma u^{-1}Hu_\nu+\big(H-f_{\nu}+\gamma u^{-1}u_\nu \big)\big(-f_\nu+H\big)\Big) u^{-\gamma}e^{-f}.
\end{split}
\]

\bigskip
\noindent
{\bf Step 2.} Verify \eqref{goal G}.
\bigskip

For convenience, we introduce $X=G'(v_{0})u^{\alpha-\gamma}e^{-kf}$ and $Y=u^{-1}u_\nu$. Then \eqref{G'} shows
\[
H = f_{\nu}+X-\gamma Y.
\]
We compute
\[
\begin{split}
G'(v_{0})V''(0) = {} & \int_{\de E}u^{\gamma-\alpha}e^{kf}X\cdot\Big(X-kf_{\nu}+(\alpha-2\gamma)Y\Big)u^{\alpha-2\gamma}e^{-(k+1)f} \\
= {} & \int_{\de E}\Big(X^{2}-kXf_{\nu}+(\alpha-2\gamma)XY\Big)u^{-\gamma}e^{-f}
\end{split}
\]
and
\[
\begin{split}
A''(0) \leq \int_{\de E}\bigg[ & -\frac{(f_{\nu}+X-\gamma Y)^{2}}{n-1}-(n-1)\lambda-\frac{1}{N}f_{\nu}^{2}+\gamma Yf_{\nu} \\
& -\gamma Y^{2}-\gamma Y(f_{\nu}+X-\gamma Y)+X(X-\gamma Y) \bigg]u^{-\gamma}e^{-f} \\
= \int_{\de E}\bigg[ & -(n-1)\lambda-\Big(\frac{1}{n-1}+\frac{1}{N}\Big)f_{\nu}^{2}-\frac{2}{n-1}Xf_{\nu}\\
&+\frac{2\gamma}{n-1}Yf_{\nu} +\Big(1-\frac{1}{n-1}\Big)X^{2}+\Big(\frac{2\gamma}{n-1}-2\gamma\Big)XY \\
& +\Big(\frac{n-2}{n-1}\gamma^{2}-\gamma\Big)Y^{2}\bigg]u^{-\gamma}e^{-f}.
\end{split}
\]
It then follows that
\[
\begin{split}
& A''(0)-G'(v_{0})V''(0) \\
\leq {} & \int_{\de E}\bigg[ -(n-1)\lambda-\frac{N+n-1}{N(n-1)}f_{\nu}^{2}+\Big(k-\frac{2}{n-1}\Big)Xf_{\nu}+\frac{2\gamma}{n-1}Yf_{\nu} \\
& \quad \quad \ \, -\frac{1}{n-1}X^{2}+\Big(\frac{2\gamma}{n-1}-\alpha\Big)XY+\Big(\frac{n-2}{n-1}\gamma^{2}-\gamma\Big)Y^{2}\bigg]u^{-\gamma}e^{-f}.
\end{split}
\]
Recalling $k=\frac{2}{n-1}$ and $\alpha=\frac{2\gamma}{n-1}$,
\[
\begin{split}
& A''(0)-G'(v_{0})V''(0) \\
\leq {} & \int_{\de E}\bigg[-(n-1)\lambda-\frac{1}{n-1}X^{2}-\frac{N+n-1}{N(n-1)}f_{\nu}^{2} \\
& \quad \quad \ \, +\frac{2\gamma}{n-1}Yf_{\nu}+\Big(\frac{n-2}{n-1}\gamma^{2}-\gamma\Big)Y^{2}\bigg]u^{-\gamma}e^{-f}.
\end{split}
\]
Using
\[
-\frac{N+n-1}{N(n-1)}f_{\nu}^{2}+\frac{2\gamma}{n-1}Yf_{\nu}
\leq  \frac{N\gamma^{2}}{(N+n-1)(n-1)} Y^2
\]
and $0\leq\gamma\leq\frac{N+n-1}{N+n-2}$, we obtain
\[
\begin{split}
& A''(0)-G'(v_{0})V''(0) \\
\leq {} & \int_{\de E}\bigg[-(n-1)\lambda-\frac{1}{n-1}X^{2}+\Big(\frac{N+n-2}{N+n-1}\gamma^{2}-\gamma\Big)Y^{2}\bigg]u^{-\gamma}e^{-f} \\
\leq {} & \int_{\de E}\Big(-(n-1)\lambda-\frac{1}{n-1}X^{2}\Big)u^{-\gamma}e^{-f}.
\end{split}
\]
Combining this with $V'(0)=\int_{\de E}u^{\alpha-\gamma} e^{-(k+1)f}$ and $X=G'(v_{0})u^{\alpha-\gamma}e^{-kf}$,
\[
\begin{split}
G''(v_{0}) = {} & \frac{A''(0)-G'(v_{0})V''(0)}{V'^{2}(0)} \\
\leq {} & -(n-1)\lambda\cdot\frac{\int_{\de E}u^{-\gamma}e^{-f}}{\left(\int_{\de E}u^{\alpha-\gamma}e^{-(k+1)f}\right)^{2}}
-\frac{G'^{2}(v_{0})}{n-1}\cdot\frac{\int_{\de E}u^{2\alpha-3\gamma}e^{-(2k+1)f}}{\left(\int_{\de E}u^{\alpha-\gamma}e^{-(k+1)f}\right)^{2}}.
\end{split}
\]
Recalling $\inf_{M}u=1$ and $\gamma\geq\alpha$,
\[
\begin{split}
\int_{\de E}u^{-\gamma}e^{-f}
& = \int_{\de E}u^{2\gamma-2\alpha}e^{2kf} \cdot u^{2\alpha-3\gamma}e^{-(2k+1)f} \\
& \geq e^{-2kF}\int_{\de E}u^{2\alpha-3\gamma}e^{-(2k+1)f}.
\end{split}
\]
It then follows that
\[
G''(v_{0}) \leq \left(-(n-1)\lambda e^{-2kF}
-\frac{G'^{2}(v_{0})}{n-1}\right)\cdot\frac{\int_{\de E}u^{2\alpha-3\gamma}e^{-(2k+1)f}}{\left(\int_{\de E}u^{\alpha-\gamma}e^{-(k+1)f}\right)^{2}}.
\]
Using the Cauchy-Schwarz inequality
\[
\begin{split}
\left(\int_{\de E}u^{\alpha-\gamma}e^{-(k+1)f}\right)^{2}
= {} & \left(\int_{\de E}u^{\alpha-\frac{3}{2}\gamma}e^{-k-\frac{1}{2}f} \cdot u^{\frac{1}{2}\gamma}e^{-\frac{1}{2}f}\right)^{2} \\
\leq {} & \left(\int_{\de E}u^{2\alpha-3\gamma}e^{-(2k+1)f}\right)\left(\int_{\de E}u^{\gamma}e^{-f}\right),
\end{split}
\]
we see that
\[
G''(v_{0}) \leq \left(-(n-1)\lambda e^{-2kF}
-\frac{G'^{2}(v_{0})}{n-1}\right)\cdot\frac{1}{\int_{\de E}u^{\gamma}e^{-f}}.
\]
Recalling $G(v_{0})=I(v_{0})=\int_{\de E}u^{\gamma}e^{-f}$, we obtain \eqref{goal G}.
\end{proof}

\begin{proof}[Proof of (2) in Theorem \ref{main theorem} when $3\leq n\leq7$]
Combining Lemma \ref{ODE comparison}, \eqref{ODE comparison condition a}, \eqref{ODE comparison condition b} and Lemma \ref{ODE comparison condition c}, the function $J:=e^{-\frac{F}{n}}I$ is a continuous function on $[0,V_{1})$ satisfies the assumptions (a)-(c) in Lemma \ref{ODE comparison} with $V_{1}=e^{-\frac{F}{n}}V_{0}$ and $\Lambda=\lambda e^{-(2k+\frac{2}{n})F}$. Then
\[
e^{-\frac{F}{n}}V_{0} \leq \big(\lambda e^{-\big(2k+\frac{2}{n}\big)F}\big)^{-\frac{n}{2}}\vol(\mathbb{S}^{n})
\]
and so
\[
V_{0} \leq e^{\big(kn+1+\frac{1}{n}\big)F}\lambda^{-\frac{n}{2}}\vol(\mathbb{S}^{n}).
\]
By $\inf_{M}u=1$ and $\alpha\geq0$, we have
\[
V_{0} = \int_{M}u^{\alpha}e^{-(k+1)f} \geq e^{-kF}\int_{M}e^{-f} = e^{-kF}\vol_{f}(M,g).
\]
Combining the above and recalling $k=\frac{2}{n-1}$,
\[
\vol_{f}(M,g) \leq e^{\frac{(n+1)(3n-1)}{n(n-1)}F} \cdot \lambda^{-\frac{n}{2}}\vol(\mathbb{S}^{n}).
\]
\end{proof}

\subsection{The case $n\geq8$}
In this case, $\de E$ may have singularities. The global volume comparison can be proved by modifying the argument (of the case $3\leq n\leq 7$) verbatim as in \cite[Proof of Lemma 2 ($n\geq8$) in Appendix A]{AX17}.


\begin{thebibliography}{99}

\bibitem{AX17} Antonelli, G.; Xu, K.; {\em New spectral Bishop-Gromov and Bonnet-Myers theorems and applications to isoperimetry}, preprint, arXiv:2405.08918.

\bibitem{Bray97} Bray, H., {\em The Penrose inequality in general relativity and volume comparison theorems involving scalar curvature}, Thesis (Ph.D.)--Stanford University
ProQuest LLC, Ann Arbor, MI, 1997. 103 pp.

\bibitem{BGLZ19} Bray, H.; Gui, F.; Liu, Z.; Zhang, Y., {\em Proof of Bishop's volume comparison theorem using singular soap bubbles}, preprint, arXiv:1903.12317.

\bibitem{BE85} Bakry, D.; \'{E}mery, M., {\em Diffusions hypercontractives}, S\'{e}minaire de probabilit\'{e}s, XIX, 1983/84, 177--206. Lecture Notes in Math., 1123, Springer-Verlag, Berlin, 1985.

\bibitem{BQ00} Bakry, D.; Qian, Z., {\em Some new results on eigenvectors via dimension, diameter, and Ricci curvature}, Adv. Math. {\bf 155} (2000), no. 1, 98--153.

\bibitem{BQ05} Bakry, D.; Qian, Z., {\em Volume comparison theorems without Jacobi fields}, Current trends in potential theory, 115--122. Theta Ser. Adv. Math., {\bf 4} Theta, Bucharest, 2005.

\bibitem{CLMS24} Chodosh, O.; Li, C.; Minter, P.; Stryker, D., {\em Stable minimal hypersurfaces in $\mathbf{R}^{5}$}, preprint, arXiv:2401.01492.

\bibitem{Gromov19} Gromov, M., {\em Four Lectures on Scalar Curvature}, preprint, arXiv:1908.10612.

\bibitem{Li05} Li, X.-D., {\em Liouville theorems for symmetric diffusion operators on complete Riemannian manifolds}, J. Math. Pures Appl. (9) {\bf 84} (2005), no. 10, 1295--1361.

\bibitem{Lichnerowicz70} Lichnerowicz, A., {\em Vari\'{e}t\'{e}s riemanniennes \`{a} tenseur C non n\'{e}gatif}, C. R. Acad. Sci. Paris S\'{e}r. A-B {\bf 271} (1970), A650--A653.

\bibitem{Lichnerowicz7172} Lichnerowicz, A., {\em Vari\'{e}t\'{e}s k\"{a}hl\'{e}riennes \`{a} premi\`{e}re classe de Chern non negative et vari\'{e}t\'{e}s riemanniennes \`{a} courbure de Ricci g\'{e}n\'{e}ralis\'{e}e non negative}, J. Differential Geometry {\bf 6} (1971/72), 47--94.

\bibitem{Limoncu12} Limoncu, M., {\em The Bakry-Emery Ricci tensor and its applications to some compactness theorems}, Math. Z. {\bf 271} (2012), no. 3--4, 715--722.

\bibitem{Lott03} Lott, J., {\em Some geometric properties of the Bakry-\'{E}mery-Ricci tensor}, Comment. Math. Helv. {\bf 78} (2003), no. 4, 865--883.

\bibitem{Maggi12} Maggi, F. {\em Sets of finite perimeter and geometric variational problems}, An introduction to geometric measure theory, Cambridge Stud. Adv. Math., 135, Cambridge University Press, Cambridge, 2012. xx+454 pp.

\bibitem{Morgan03} Morgan, F., {\em Regularity of isoperimetric hypersurfaces in Riemannian manifolds}, Trans. Amer. Math. Soc. {\bf 355} (2003), no. 12, 5041--5052.

\bibitem{Morgan09} Morgan, F., {\em Geometric measure theory. A beginner's guide}, Fourth edition, Elsevier/Academic Press, Amsterdam, 2009. viii+249 pp.

\bibitem{Perelman02} Perelman, G., {\em The entropy formula for the Ricci flow and its geometric applications}, preprint, arXiv:math/0211159.

\bibitem{Qian97} Qian, Z., {\em Estimates for weighted volumes and applications}, Quart. J. Math. Oxford Ser. (2) {\bf 48} (1997), no. 190, 235--242.

\bibitem{Tadano16} Tadano, H., {\em Remark on a diameter bound for complete Riemannian manifolds with positive Bakry-\'{E}mery Ricci curvature}, Differential Geom. Appl. {\bf 44} (2016), 136--143.

\bibitem{WW09} Wei, G.; Wylie, W., {\em Comparison geometry for the Bakry-Emery Ricci tensor}, J. Differential Geom. {\bf 83} (2009), no. 2, 377--405.

\bibitem{Xu23} Xu, K., {\em Dimension constraints in some problems involving intermediate curvature}, preprint, arXiv:2301.02730, to apprear in Trans. Amer. Math. Soc.

\bibitem{Zhu21} Zhu, J., {\em Width estimate and doubly warped product}, Trans. Amer. Math. Soc. {\bf 374} (2021), no. 2, 1497--1511.

\end{thebibliography}
\end{document}